\numberwithin{equation}{section}
\newtheorem{Theorem}{Theorem}[section]
\newtheorem{Lemma}{Lemma}[section]
\newtheorem{Corollary}[Theorem]{Corollary}
 \def\@biblabel#1{#1.}
\newenvironment{remark}[1][Remark]{\begin{trivlist}
\item[\hskip \labelsep {\bfseries #1}]}{\end{trivlist}}
\begin{document}
\title{Infinite dimensional Hilbert tensors on spaces of analytic functions}

\author{Yisheng Song\thanks{School of Mathematics and Information Science  and Henan Engineering Laboratory for Big Data Statistical Analysis and Optimal Control,
 Henan Normal University, XinXiang HeNan,  P.R. China, 453007.
Email: songyisheng@htu.cn. This author's work was supported by
the National Natural Science Foundation of P.R. China (Grant No.
11571095, 11601134) } and Liqun Qi \thanks{Department of Applied Mathematics, The Hong Kong Polytechnic University, Hung Hom, Kowloon, Hong Kong. Email: liqun.qi@polyu.edu.hk. This author's work was  supported by
the National Natural Science Foundation of P.R. China (Grant No.
11571095) and by the Hong Kong Research Grant Council (Grant No. PolyU
501212, 501913, 15302114 and 15300715).}}

\date{\today}

 \maketitle

\begin{abstract}
\noindent  
In this paper, the $m-$order infinite  dimensional Hilbert tensor (hypermatrix) is intrduced to  define an   $(m-1)$-homogeneous operator on the spaces of analytic functions, which is called Hilbert tensor operator. The boundedness of Hilbert tensor operator is presented on Bergman spaces $A^p$ ($p>2(m-1)$). On the base of the boundedness, two  positively homogeneous operators are introduced to the spaces of analytic functions, and hence the upper bounds of norm of such two operators are found on  Bergman spaces $A^p$ ($p>2(m-1)$). In particular,  the norms of such two operators on Bergman spaces $A^{4(m-1)}$ are smaller than or equal to $\pi$ and
$\pi^\frac1{m-1}$, respectively.

\noindent {\bf Key words:}\hspace{2mm} Hilbert tensor,  Analytic function, Upper bound,   Bergman space, Gamma function.  \vspace{3mm}

\noindent {\bf AMS subject classifications (2010):}\hspace{2mm}30H10, 30H20, 30H05, 30C10,
47H15, 47H12, 34B10, 47A52, 47J10, 47H09, 15A48, 47H07.
  \vspace{3mm}

\end{abstract}

\section{\bf Introduction}\label{}
The Hilbert matrix $H$ is a matrix with entries $H_{ij}$ being the unit fractions  for nonnegative integers $i,j$, i.e.,
$$H_{ij}=\frac1{i+j+1},\ i,j=0,1,2,\cdots$$
which was introduced by Hilbert \cite{H1894}. Let $i,j=0,1,2,\cdots,n$. Then such an $n$-dimensional Hilbert matrix is a compact linear operator on finite dimensional space $\mathbb{R}^n$. The properties of $n$-dimensional Hilbert matrix  had been studied by Frazer \cite{F1946} and Taussky \cite{T1949}. An
infinite dimensional Hilbert matrix $H$ may be regarded as a bounded linear operator from the sequence space
$l^2$ into itself, but not compact operator (Choi \cite{C1983}) and Ingham \cite{I1936}).  Magnus \cite{M1950} and Kato \cite{K1957} showed the spectral properties of such a class of matrice. The infinite dimensional Hilbert matrix $H$ induces an operator defined on the sequence space
$l^p$ ($1\leq p$), for  $x=(x_k)_{k=0}^\infty\in l^p$,
\begin{equation}\label{eq:11}
H(x)=\left(\sum_{j=0}^{+\infty}\frac{x_j}{i+j+1}\right)_{i=0}^\infty.
\end{equation}
If $1<p<+\infty$, the well-known Hilbert inequality (\cite{HLP}) implies that $H$ is an operator on $l^p$ and its operator norm $\|H\|=\sup\limits_{\|x\|_{l_p}=1}\|H(x)\|_{l_p}$ is the following: \begin{equation}\label{eq:12}\|H\|=\frac{\pi}{\sin(\frac{\pi}{p})},\ 1<p<+\infty.\end{equation}
On the other hand, the infinite dimensional Hilbert matrix $H$ also induces a bounded operator  on the spaces of analytic functions defined by
\begin{equation}\label{eq:13}
H(f)(z)=\sum_{i=0}^{+\infty}\left(\sum_{j=0}^{+\infty}\frac{a_j}{i+j+1}\right)z^i,
\end{equation}
for all analytic functions $f(z)=\sum\limits_{k=0}^{+\infty}a_kz^k$ with the convergent coefficients $\sum\limits_{j=0}^{+\infty}\frac{a_j}{i+j+1}$ for each $i$. In Hardy spaces $H^p$, Diamantopoulos and Siskakis \cite{DS2000} proved that $H$ is  bounded for $p>1$ and found an upper boundedness of its operator norm. In 2004,  Diamantopoulos \cite{D2004} showed  that $H$ is  bounded on Bergman spaces for $p>2$ and obtained the upper boundedness of its operator norm. Aleman,  Montes-Rodríguez, Sarafoleanu \cite{AMS} studied the eigenfunctions of Hilbert matrix operator on Hardy space $H^p$ ($p>1$).

As a natural extension of a Hilbert matrix,
the entries of an $m$-order infinite dimensional Hilbert  tensor (hypermatrix) $\mathcal{H}=(\mathcal{H}_{i_1i_2\cdots i_m})$ are defined by
$$\mathcal{H}_{i_1i_2\cdots i_m}=\frac1{i_1+i_2+\cdots+i_m+1},\ i_k=0,1,2,\cdots,\ k=1,2,\cdots,m.$$
Each entry of $\mathcal{H}$ is derived from the integral
\begin{equation}\label{eq:14} \mathcal{H}_{i_1i_2\cdots i_m}=\int_0^1 t^{i_1+i_2+\cdots+i_m}dt. \end{equation}
Clearly, $\mathcal{H}$ are positive ($\mathcal{H}_{i_1i_2\cdots i_m}>0$) and symmetric ($\mathcal{H}_{i_1i_2\cdots i_m}$ are invariant for any permutation of the indices),  and  $\mathcal{H}$ is a Hankel tensor with $v=(1,\frac12, \frac13, \cdots,\frac{1}{n},\cdots)$ (Qi \cite{Q2015}). Song and Qi \cite{SQ2014} studied  infinite and finite dimensional Hilbert tensors, and showed that $\mathcal{H}$ defines a bounded and positively $(m-1)$-homogeneous operator  from $l^1$ into $l^p$ ($1<p<\infty$), and found the upper boundedness of corresponding positively homogeneous operator norm.

A real $m$-order $n$-dimensional tensor (hypermatrix) $\mathcal{A} = (a_{i_1\cdots i_m})$ is a multi-array of real entries $a_{i_1\cdots
	i_m}$, where $i_j \in \{1,2,\cdots,n\}$ for $j \in \{1,2,\cdots,m\}$. Denote the set of all
real $m$th order $n$-dimensional tensors by $T_{m, n}$. Then $T_{m,
	n}$ is a linear space of dimension $n^m$. Let $\mathcal{A} = (a_{i_1\cdots
	i_m}) \in T_{m, n}$. If the entries $a_{i_1\cdots i_m}$ are
invariant under any permutation of their indices, then $\mathcal{A}$ is
called a  symmetric tensor.     Let $\mathcal{A} =(a_{i_1\cdots i_m}) \in
T_{m, n}$ and ${\bf x} \in \mathbb{R}^n$. Then $\mathcal{A} {\bf x}^{m-1}$ is a vector in $\mathbb{R}^n$ with
its $i$th component as
$$\left(\mathcal{A} {\bf x}^{m-1}\right)_i: = \sum_{i_2, \cdots, i_m=1}^n a_{ii_2\cdots
	i_m}x_{i_2}\cdots x_{i_m}$$ for $i \in \{1,2,\cdots,n\}$ (\cite{LQ1}).

For $m-$order finite dimensional tensor, various structured tensors were studied well. For more details, M-tensors see Zhang, Qi and Zhou \cite{ZQZ} and
Ding, Qi and Wei \cite{DQW}; P-(B-)tensors see Song and Qi \cite{SQ-15}, Qi and Song \cite{QS}; copositive tensors see  Song and Qi \cite{SQ 15}; Cauchy tensor see Chen and Qi \cite{CQ};  the applications in nonlinear complementarity problem see Song and Qi \cite{SQ15}, Che, Qi, Wei \cite{CQW}, Song and Yu \cite{SY15}, Luo, Qi and Xiu \cite{LQX}, Gowda, Luo, Qi and Xiu \cite{GLQX}, Bai, Huang and Wang \cite{BHW}, Wang, Huang and Bai \cite{WHB}, Ding, Luo and Qi \cite{DLQ}, Suo and Wang \cite{HSW}, Song and Qi \cite{SQ13}, Ling, He, Qi \cite{LHQ2015, LHQ15}, Chen, Yang, Ye \cite{CYY}.\\

In this paper, we show that an $m-$order infinite  dimensional Hilbert tensor defines an   $(m-1)$-homogeneous operator on the spaces of analytic functions (Hardy spaces $H^p$ ($p>m-1$) and Bergman spaces $A^p$ ($p>2(m-1)$)),
\begin{equation}\label{eq:15}
\mathcal{H}(f)(z)=\sum_{k=0}^{+\infty}\left(\sum_{i_2,i_3,\cdots,i_m=0}^{+\infty}\frac{a_{i_2}a_{i_3}\cdots a_{i_m}}{k+i_2+i_3+\cdots+i_m+1}\right)z^k
\end{equation}
for all analytic functions $f(z)=\sum\limits_{k=0}^{+\infty}a_kz^k.$  The upper bound of Hilbert tensor operator $\mathcal{H}(f)$ is found on Bergman spaces $A^p$ ($p>2(m-1)$) with the help of the proof technique of Diamantopoulos \cite{D2004}. So two  positively homogeneous operators may be defined on Bergman spaces $A^p$
by the formula
\begin{equation} \label{TF}
T_\mathcal{H}(f) (z) := \begin{cases}\|f \|_{A^{p(m-1)}}^{2-m}\mathcal{H}(f)(z),\ f \neq0\\
0,\ \ \ \ \ \ \ \ \ \ \  \ \ \ \ \ \ \ \ \ \ \ \
f =0\end{cases}\mbox{and } F_\mathcal{H}(f) (z) :=\left(\mathcal{H}(f)(z)\right)^{\frac1{m-1}}(m \mbox{ is even}),
\end{equation}
where $T_\mathcal{H} : A^{p(m-1)} \to A^p$ and $F_\mathcal{H} :  A^{p} \to  A^p$. We  obtain the upper bound of operator norm $\|T_\mathcal{H}\|$ and $\|F_\mathcal{H}\|$. In particular, when $p=4(m-1)$,  \begin{equation}\label{eq:17}\|T_\mathcal{H}\|\leq \pi \mbox{ and }\|F_\mathcal{H}\|\leq \pi^\frac1{m-1}.\end{equation}


The paper is organized as follows:
In Section 2, we will give some basic definitions and facts, which will be used to the proof of main results. In Section 3, we first study the definition of Hilbert tensor operator and give the corresponding proof to show that such an operator is  well-defined. We prove the intergral form of Hilbert tensor operator. Secondly, the boundedness of Hilbert tensor operator is proved  on Bergman spaces $A^p$ ($p>2(m-1)$) by means of its intergral form.  Finally, we define two  positively homogeneous operators induced by $m$-order infinite dimensional Hilbert  tensor and prove the upper boundedness of their operator norm.


\section{Preliminaries and basic facts}

In this section, we will  collect some basic definitions and facts, which will be used later on. Throughout this paper,  let $\mathbb{C}$ be the complex plane, and let
$$\mathbb{B}: = \{z\in\mathbb{C}: \|z\| < 1\} $$
be the open unit disk in $\mathbb{C}$. Likewise, we write $\mathbb{R}$ for the real line.  The normalized
Lebesgue measure on $\mathbb{B}$ will be denoted by $d\mu$.  Obviously, $$d\mu(z)=\frac1\pi dxdy=\frac1\pi r drd\theta$$ for $z=x+yi=re^{i\theta}.$  For $0 < p < +\infty$, the Bergman space $A^p$ is a space of all
 analytic functions $f$ in $\mathbb{B}$ with
\begin{equation}\label{eq:21}
\|f\|_{A^p}=\left(\int_{\mathbb{B}}|f(z)|^pd\mu(z)\right)^\frac1p<+\infty.
\end{equation}
The Hardy space $H^p$ is a space of all
analytic functions $f$ in $\mathbb{B}$ with
\begin{equation}\label{eq:22}\|f\|_{H^p}=\sup_{r<1}\left(\frac1{2\pi}\int_{0}^{2\pi}|f(re^{i\theta})|^pd\theta\right)^\frac1p<+\infty.\end{equation}
It is well-known that both  Hardy space $H^p$ and Bergman space $A^p$ are a Banach space for $1\leq p$, and $H^p\subset A^p$, and both $H^p$ and $A^p$ are embeded as a closed subspaces in Lebesgue space $L^p(\mathbb{B})$, and $H^q\subset H^p$, $A^q\subset A^p$ for $q\leq p$ (for more details, see \cite{D1970,DS2004}).

Let $(X,\|\cdot\|_X)$ and $(Y,\|\cdot\|_Y)$ be two Banach space, and let $T:K\subset X\to Y$ be an operator and let $r\in\mathbb{R}$. $T$ is called \begin{itemize}
	\item[(i)]  {\em $r$-homogeneous} if $T(tx)=t^rTx$ for each  $t\in\mathbb{C}$ and all $x\in K$;
	\item[(ii)]  {\em positively homogeneous} if $T(tx)=tTx$ for each $t>0$ and all $x\in K$;
	\item[(iii)]  {\em bounded} if there is a real number $M>0$ such that  $$\|Tx\|_Y\leq M\|x\|_X, \mbox{ for all }x\in K.$$
\end{itemize}

The gamma function $\Gamma(z)$  is defined by the formula
\begin{equation}\label{eq:23}
\Gamma(z) =\int_{0}^{+\infty}e^{-t}t^{z-1}dt
\end{equation}
whenever the complex variable $z$ has a positive real part, i.e., $\Re (z)>0$. The beta function $\beta(u,v)$ is defined by the formula
\begin{equation}\label{eq:24}
\beta(u,v) =\int_{0}^{1}t^{u-1}(1-t)^{v-1}dt,\ \Re (u)>0,\ \Re (v)>0.
\end{equation}
The formula relating the beta function to the gamma function is the following:
\begin{equation}\label{eq:25}
\beta(u,v) =\frac{\Gamma(u)\Gamma(v)}{\Gamma(u+v)}.
\end{equation}
Furthermore, the gamma function has the following properties (\cite{L1972}):
\begin{itemize}
	\item[(i)] $\Gamma(1)=1$ and $\Gamma(\frac12)=\sqrt{\pi}$;
	\item[(ii)] $\Gamma(z)\Gamma(1-z)=\dfrac{\pi}{\sin(\pi z)}$ for non-integral complex numbers $z$.
	\item[(iii)] The duplication formula: $\Gamma(z)\Gamma(z+\frac12)=2^{1-2z}\sqrt{\pi}\Gamma(2z),$ i.e.,
	\begin{equation}\label{eq:26}\frac{\Gamma(z)\Gamma(z)}{\Gamma(2z)}=2^{1-2z}\frac{\Gamma(z)\Gamma(\frac12)}{\Gamma(z+\frac12)}.\end{equation}
\end{itemize}

\begin{Lemma}\label{le:25} \em(\cite[Page 36, Lemma]{D1970}) If $f\in H^p$ and  $0<p<+\infty$, then
	\begin{equation}\label{eq:31}
	|f(z)|\leq \left(\frac2{1-|z|}\right)^\frac1p\|f\|_{H^p}.
	\end{equation}
\end{Lemma}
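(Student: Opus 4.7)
The plan is to combine the subharmonicity of $|f|^p$ on $\mathbb{B}$ with the Poisson sub-mean-value inequality and the definition of the $H^p$ norm. First, since $f$ is holomorphic on $\mathbb{B}$, the function $\log|f|$ is subharmonic, hence $|f|^p=\exp(p\log|f|)$ is subharmonic for every $p>0$. Fix $z\in\mathbb{B}$ and choose $r$ with $|z|<r<1$. Applying the sub-mean-value property against the Poisson kernel of the disk $\{|w|<r\}$ then gives
$$|f(z)|^p \;\le\; \frac{1}{2\pi}\int_{0}^{2\pi} \frac{r^2-|z|^2}{|re^{i\theta}-z|^2}\,|f(re^{i\theta})|^p\,d\theta.$$

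Next, I would bound the Poisson kernel pointwise using the elementary inequality $|re^{i\theta}-z|\ge r-|z|$, which yields $\frac{r^2-|z|^2}{|re^{i\theta}-z|^2}\le \frac{r+|z|}{r-|z|}$. Pulling this constant factor outside the integral and using that the integral means $\frac{1}{2\pi}\int_{0}^{2\pi}|f(re^{i\theta})|^p\,d\theta$ are monotone nondecreasing in $r$ and bounded above by $\|f\|_{H^p}^p$ (both consequences of the subharmonicity of $|f|^p$) produces
$$|f(z)|^p \;\le\; \frac{r+|z|}{r-|z|}\,\|f\|_{H^p}^p.$$

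Finally, letting $r\to 1^{-}$ yields $|f(z)|^p\le \frac{1+|z|}{1-|z|}\,\|f\|_{H^p}^p$, and the crude estimate $\frac{1+|z|}{1-|z|}\le \frac{2}{1-|z|}$ combined with taking $p$th roots gives the stated bound. The only nontrivial input is the subharmonic Poisson sub-mean-value inequality applied to $|f|^p$, which is standard for continuous subharmonic functions on a disk; every remaining step is routine algebra and a one-sided limit.
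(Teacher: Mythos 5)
The paper offers no proof of this lemma at all: it is quoted verbatim from Duren \cite[p.~36]{D1970} as a known growth estimate for $H^p$ functions, so there is nothing internal to compare your argument against. Your blind proof is a correct and self-contained derivation of the cited result, and it follows the standard route: subharmonicity of $|f|^p$ for every $p>0$ (which holds also at the zeros of $f$, where $\log|f|=-\infty$, since $|f|^p$ is continuous and the sub-mean-value property is preserved under the convex increasing map $x\mapsto e^{px}$ extended by $e^{-\infty}=0$), the Poisson majorization on the disk $\{|w|<r\}$, the pointwise kernel bound $\frac{r^2-|z|^2}{|re^{i\theta}-z|^2}\le\frac{r+|z|}{r-|z|}$, and the definition of $\|f\|_{H^p}$ as the supremum of the integral means. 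Note that you do not even need the monotonicity of the means in $r$ — the bound by the supremum is immediate from \eqref{eq:22} — so that remark is harmless but superfluous. The limit $r\to1^-$ and the estimate $\frac{1+|z|}{1-|z|}\le\frac{2}{1-|z|}$ give exactly the stated inequality. In short: the proposal is correct, and it supplies a proof where the paper supplies only a reference; the same conclusion could be reached one line faster by quoting Harnack's inequality for the least harmonic majorant of $|f|^p$, but that is the same Poisson-kernel estimate in disguise.
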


\begin{Lemma}\label{le:26} \em(\cite[Page 755, Corollary]{V1993}) If $f\in A^p$ and  $0<p<+\infty$, then
	\begin{equation}\label{eq:31}
	|f(z)|\leq \left(\frac1{1-|z|^2}\right)^\frac2p\|f\|_{A^p}.
	\end{equation}
\end{Lemma}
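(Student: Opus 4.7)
The strategy is to combine two classical ingredients: $|f|^{p}$ is subharmonic on $\mathbb{B}$ for every analytic $f$ and every $p>0$, and the disk automorphism
$$\varphi_{z}(w):=\frac{z-w}{1-\overline{z}w}$$
is an involution whose complex Jacobian satisfies $|\varphi_{z}'(w)|^{2}=(1-|z|^{2})^{2}/|1-\overline{z}w|^{4}$. The entire proof rests on reducing to the case $z=0$.

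First I would settle the statement at the origin. Since $|f|^{p}$ is subharmonic, applying the submean-value inequality on each circle $\{|w|=r\}$, $0<r<1$, and integrating in $r\,dr$ from $0$ to $1$ gives
$$
|f(0)|^{p}\;\le\;\int_{\mathbb{B}}|f(w)|^{p}\,d\mu(w)\;=\;\|f\|_{A^{p}}^{p},
$$
which is exactly the claim at $z=0$ because $(1-|0|^{2})^{-2/p}=1$.

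For a general $z\in\mathbb{B}$, I would transfer the bound to the origin by a conformal change of variables. Since $1-\overline{z}w$ takes values in the right half-plane for $w\in\mathbb{B}$, the principal branch of the logarithm lets me define the analytic function
$$
g(w)\;:=\;f\bigl(\varphi_{z}(w)\bigr)\,\left(\frac{1-|z|^{2}}{(1-\overline{z}w)^{2}}\right)^{\!2/p},
$$
for which $|g(w)|^{p}=|f(\varphi_{z}(w))|^{p}\,|\varphi_{z}'(w)|^{2}$. The substitution $\zeta=\varphi_{z}(w)$, whose real Jacobian is $|\varphi_{z}'(w)|^{2}$, then yields
$$
\|g\|_{A^{p}}^{p}\;=\;\int_{\mathbb{B}}|f(\varphi_{z}(w))|^{p}\,|\varphi_{z}'(w)|^{2}\,d\mu(w)\;=\;\int_{\mathbb{B}}|f(\zeta)|^{p}\,d\mu(\zeta)\;=\;\|f\|_{A^{p}}^{p}.
$$

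Finally, applying the $z=0$ estimate to the analytic function $g$ gives $|g(0)|^{p}\le\|g\|_{A^{p}}^{p}=\|f\|_{A^{p}}^{p}$, and since $|g(0)|=|f(z)|\,(1-|z|^{2})^{2/p}$, taking $p$-th roots produces the stated inequality. The principal obstacle is the middle step: one must verify that the fractional power $(1-\overline{z}w)^{-4/p}$ admits a single-valued analytic representative on $\mathbb{B}$ (handled by the right-half-plane image argument) so that $g$ is genuinely analytic and the Möbius substitution acts as an $A^{p}$-isometry. A cruder attack, applying the submean-value inequality to $|f|^{p}$ directly on the Euclidean disk $\{w:|w-z|<1-|z|\}\subset\mathbb{B}$, would only yield the weaker bound $(1-|z|)^{-2/p}\|f\|_{A^{p}}$ and therefore miss the sharper constant asserted in the lemma.
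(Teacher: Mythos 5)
Your argument is correct. Note, however, that the paper does not prove this lemma at all: it is quoted verbatim from the cited reference (Vukoti\'c, \emph{Proc. Amer. Math. Soc.} 117 (1993), the Corollary on p.~755), so there is no in-paper proof to compare against. What you have written is essentially the standard proof of that corollary, and it is sound in every step: the sub-mean-value inequality for the subharmonic function $|f|^p$ integrated against $r\,dr$ does give $|f(0)|^p\le\|f\|_{A^p}^p$ with the normalized measure $d\mu$ (the factor $\tfrac12$ cancels on both sides); the involution $\varphi_z$ has $|\varphi_z'(w)|^2=(1-|z|^2)^2/|1-\overline{z}w|^4$; the function $1-\overline{z}w$ maps $\mathbb{B}$ into the disk of radius $|z|$ about $1$, hence into the right half-plane, so the principal-branch power makes $g$ single-valued and analytic; and the M\"obius substitution is an $A^p$-isometry on $\|g\|_{A^p}$, so evaluating the origin estimate at $g$ yields $|f(z)|(1-|z|^2)^{2/p}\le\|f\|_{A^p}$. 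Your closing remark is also apt: the crude Euclidean sub-mean-value bound on the disk $\{|w-z|<1-|z|\}$ only gives $(1-|z|)^{-2/p}$ up to a constant, and the conformal transfer is exactly what upgrades this to the clean constant $1$ with $(1-|z|^2)^{-2/p}$ that the lemma (and the rest of the paper, e.g.\ Lemmas \ref{P32} and \ref{P33}) relies on.
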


\section{Hilbert tensor operators}

\subsection{Intergral form of Hilbert tensor operator}
\begin{Lemma}\label{p31}\em Let $\mathcal{H}$ be an $m-$order infinite dimensional Hilbert tensor, and let $f(z)=\sum\limits_{k=0}^{+\infty}a_kz^k\in L^{m-1}(\mathbb{B})$. Then \begin{equation}\label{eq:31}
	\mathcal{H}(f)(z)=\sum_{k=0}^{+\infty}\left(\sum_{i_2,i_3,\cdots,i_m=0}^{+\infty}\frac{a_{i_2}a_{i_3}\cdots a_{i_m}}{k+i_2+i_3+\cdots+i_m+1}\right)z^k
	\end{equation}
	is a well-defined analytic function  on the unit disc $\mathbb{B}$. Furthermore, $\mathcal{H}(f)(z)$ is a well-defined on Hardy space H$^p$ or on Bergman space $A^p$ ($m-1<p<+\infty$).
\end{Lemma}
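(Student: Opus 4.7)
The plan is to derive an integral representation for the coefficients of $\mathcal{H}(f)(z)$ and then deduce both well-definedness and analyticity. Starting from formula~\reff{eq:14}, each Hilbert tensor entry rewrites as $1/(k+i_2+\cdots+i_m+1) = \int_0^1 t^{k+i_2+\cdots+i_m}\, dt$, so the inner coefficient
$$c_k := \sum_{i_2,\ldots,i_m=0}^{\infty} \frac{a_{i_2}a_{i_3}\cdots a_{i_m}}{k+i_2+\cdots+i_m+1}$$
should equal $\int_0^1 t^k f(t)^{m-1}\, dt$. To make this rigorous, I would first apply Fubini--Tonelli on $[0,\rho]\times \mathbb{N}^{m-1}$ for arbitrary $\rho<1$; this is legal because the power series defining $f$ has radius of absolute convergence at least $1$, so $\sum_k|a_k|t^k$ and hence $\bigl(\sum_k|a_k|t^k\bigr)^{m-1}$ are finite on $[0,\rho]$, giving the identity $\int_0^\rho t^k f(t)^{m-1}\,dt = \sum_{i_2,\ldots,i_m} a_{i_2}\cdots a_{i_m}\int_0^\rho t^{k+i_2+\cdots+i_m}\, dt$.

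The second step is to pass from $[0,\rho]$ to $[0,1]$ by letting $\rho\to 1^-$, which requires $|f(t)|^{m-1}$ to be Lebesgue integrable at the boundary. For $f\in H^p$ with $p>m-1$, Lemma~\ref{le:25} yields $|f(t)|^{m-1}\leq \bigl(2/(1-t)\bigr)^{(m-1)/p}\|f\|_{H^p}^{m-1}$; the exponent $(m-1)/p$ lies strictly in $(0,1)$, so the dominating function is integrable and dominated convergence yields a finite limit. An analogous calculation using Lemma~\ref{le:26} covers the Bergman case (integrability here requires $p>2(m-1)$, which matches the range used throughout the paper). Either way we obtain $c_k = \int_0^1 t^k f(t)^{m-1}\,dt$ together with the uniform bound $|c_k|\leq \int_0^1|f(t)|^{m-1}\,dt =: M < \infty$.

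Finally, $|c_k|\leq M$ forces the radius of convergence of $\sum_k c_k z^k$ to be at least $1$, so $\mathcal{H}(f)(z)$ defines an analytic function on $\mathbb{B}$. Interchanging the sum over $k$ with the integral---justified by uniform convergence of $\sum_k(tz)^k$ on $[0,1]$ for fixed $|z|<1$ combined with integrability of $|f(t)|^{m-1}$---yields the closed form
$$\mathcal{H}(f)(z) = \int_0^1 \frac{f(t)^{m-1}}{1-tz}\,dt,\qquad |z|<1,$$
from which analyticity in $z$ is immediate by differentiation under the integral sign, since $|1-tz|^{-1}$ stays bounded on compact subsets of $\mathbb{B}$. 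The hardest part of the argument is the Fubini interchange together with the subsequent limit $\rho\to 1^-$: summability of $\sum_k|a_k|$ can genuinely fail at $t=1$, so one must approximate by $[0,\rho]$ first and then invoke the Hardy or Bergman pointwise bounds to control integrability near the boundary. The parameter restrictions on $p$ are dictated precisely by this boundary integrability requirement.
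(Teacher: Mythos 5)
Your core mechanism is the same as the paper's: rewrite each entry as $\frac{1}{k+i_2+\cdots+i_m+1}=\int_0^1 t^{k+i_2+\cdots+i_m}\,dt$, identify the $k$-th coefficient with the moment $\int_0^1 t^k f(t)^{m-1}\,dt$, and conclude that the coefficients are bounded uniformly in $k$, so the radius of convergence is at least $1$. The difference is in the truncation device: the paper truncates the power series (working with $f_l=\sum_{k\le l}a_kz^k$, for which the interchange is a finite-sum identity) and then passes to the limit in $l$, bounding everything by $\int_0^1\left(|f(s)|^{m-1}+1\right)ds$, which it asserts is finite directly from the hypothesis $f\in L^{m-1}(\mathbb{B})$; you instead truncate the integration domain to $[0,\rho]$, apply Fubini--Tonelli there, and let $\rho\to1^-$. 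Your version of the Fubini step is cleaner, and your derivation of the closed form $\int_0^1 f(t)^{m-1}(1-tz)^{-1}dt$ anticipates what the paper proves separately as Lemma~\ref{P33}(i).

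There is, however, a concrete mismatch between what you prove and what the lemma asserts. Your passage to the boundary rests on the pointwise growth estimates of Lemmas~\ref{le:25} and~\ref{le:26}, which give $\int_0^1|f(t)|^{m-1}dt<\infty$ only when $f\in H^p$ with $p>m-1$ or $f\in A^p$ with $p>2(m-1)$. The lemma is stated for every analytic $f\in L^{m-1}(\mathbb{B})$, and its ``furthermore'' clause claims well-definedness on $A^p$ for the full range $m-1<p<+\infty$; for $m-1<p\le 2(m-1)$ the Bergman estimate yields the non-integrable majorant $(1-t^2)^{-2(m-1)/p}$ with exponent $\ge 1$, so your argument simply does not reach that range, and you acknowledge as much. (To be fair, the paper's own treatment of this point is delicate: area integrability of $|f|^{m-1}$ over $\mathbb{B}$ does not obviously control the radial integral $\int_0^1|f(s)|^{m-1}ds$, and the paper offers no justification beyond the parenthetical ``since $f\in L^{m-1}(\mathbb{B})$''.) A secondary, shared looseness: both you and the paper identify the value of the multi-indexed series with a limit of approximants (Abel means over $\rho$ in your case, square partial sums in $l$ in the paper's), without establishing absolute convergence; since $\int_0^1\bigl(\sum_i|a_i|t^i\bigr)^{m-1}dt$ need not be finite, some convention about the mode of summation has to be made explicit for the statement ``the coefficient is finite'' to have a definite meaning.
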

\begin{proof}
	
	Let $f_l(z)=\sum\limits_{k=0}^{l}a_kz^k$ for all positive integer $l$.   Obviously, $\lim\limits_{l\to\infty}f_l(z)=f(z)$, and so, $\lim\limits_{l\to\infty}(f_l(z))^{m-1}=(f(z))^{m-1}$. Thus for each $z\in \mathbb{B}$, there is a  positive integer  $N$ such that $|f_l(z)|^{m-1}\leq |f(z)|^{m-1}+1$ for  all positive integer $l>N$. So for all positive integer $l>N$, we have
$$\begin{aligned}\left|\sum_{i_2,i_3,\cdots,i_m=0}^{l}\frac{a_{i_2}a_{i_3}\cdots a_{i_m}}{i_1+i_2+\cdots+i_m+1}\right|=&\left|\sum_{i_2,i_3,\cdots,i_m=0}^l a_{i_2}a_{i_3}\cdots a_{i_m}\int^1_0s^{i_1+\cdots+i_m}ds\right|\\
=&\left|\int^1_0\left(\sum_{i_2,i_3,\cdots,i_m=0}^la_{i_2}a_{i_3}\cdots a_{i_m}s^{i_2+\cdots+i_m}\right)s^{i_1} ds\right|\\
=&\left|\int^1_0\left(\sum_{i=0}^{l}a_is^i\right)^{m-1}s^{i_1} ds\right|\\
=&\left|\int^1_0\left(f_l(s)\right)^{m-1}s^{i_1} ds\right|\\
\leq&\int^1_0|f_l(s)|^{m-1}|s|^{i_1} ds\leq\int^1_0|f_l(s)|^{m-1} ds\\\leq& \int^1_0(|f(s)|^{m-1}+1) ds<+\infty\ \ (\mbox{since }f\in L^{m-1}(\mathbb{B})).
\end{aligned}$$	
Then, $$\left|\sum_{i_2,i_3,\cdots,i_m=0}^{+\infty}\frac{a_{i_2}a_{i_3}\cdots a_{i_m}}{i_1+i_2+\cdots+i_m+1}\right|<+\infty,$$
and hence the convergence radius of the following power series, denoted by $\mathcal{H}(f)(z)$, $$\mathcal{H}(f)(z)=\sum_{k=0}^{+\infty}\left(\sum_{i_2,i_3,\cdots,i_m=0}^{+\infty}\frac{a_{i_2}a_{i_3}\cdots a_{i_m}}{k+i_2+i_3+\cdots+i_m+1}\right)z^k$$
is greater than or equal to $1$. So $\mathcal{H}(f)(z)$ is a well-defined analytic function  on the unit disc $\mathbb{B}$.  The desired conclusions follow.
\end{proof}
\begin{Lemma}\label{P32}
Let \begin{equation}\label{eq:32}\mathcal{G}(f)(z)=\int^1_0\frac{\left(f(s)\right)^{m-1}}{1-zs} ds\ (m\geq2)\end{equation} for $z\in \mathbb{B}$.
The operator $\mathcal{G}(f)(z)$ is well-defined on Hardy space H$^p$ ($m-1<p<+\infty$) or on Bergman space $A^p$ ($2(m-1)<p<+\infty$).
\end{Lemma}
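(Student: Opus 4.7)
The plan is to fix $z\in\mathbb{B}$ with $|z|=r<1$, separate the geometric factor from the integrand, and reduce the problem to the single scalar estimate $\int_0^1 |f(s)|^{m-1}\,ds<+\infty$. Since $|1-zs|\geq 1-rs\geq 1-r>0$ for all $s\in[0,1]$, we immediately get the pointwise bound
$$\bigl|\mathcal{G}(f)(z)\bigr|\leq \frac{1}{1-r}\int_0^1 |f(s)|^{m-1}\,ds,$$
so the whole lemma boils down to controlling this last integral under each of the two function-space hypotheses.

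For the Hardy case $p>m-1$, I will invoke Lemma 2.5 at the real point $s\in[0,1)$ to obtain $|f(s)|\leq (2/(1-s))^{1/p}\|f\|_{H^p}$, raise to the $(m-1)$-st power, and conclude
$$\int_0^1 |f(s)|^{m-1}\,ds \leq 2^{(m-1)/p}\|f\|_{H^p}^{m-1}\int_0^1 (1-s)^{-(m-1)/p}\,ds,$$
which is finite exactly because the exponent $(m-1)/p<1$. For the Bergman case $p>2(m-1)$, I apply Lemma 2.6 analogously to obtain $|f(s)|\leq (1-s^2)^{-2/p}\|f\|_{A^p}$, giving
$$\int_0^1 |f(s)|^{m-1}\,ds \leq \|f\|_{A^p}^{m-1}\int_0^1 (1-s^2)^{-2(m-1)/p}\,ds,$$
which converges precisely because $2(m-1)/p<1$. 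In both cases the hypothesis on $p$ is exactly what makes the $\beta$-type integral finite.

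Once absolute integrability of $(f(s))^{m-1}/(1-zs)$ on $[0,1]$ is established, analyticity of $\mathcal{G}(f)$ on $\mathbb{B}$ follows by expanding $1/(1-zs)=\sum_{k=0}^{+\infty}(zs)^k$ (uniformly convergent in $s\in[0,1]$ when $|z|\leq r_0<1$) and interchanging sum and integral through dominated convergence, yielding
$$\mathcal{G}(f)(z)=\sum_{k=0}^{+\infty}\left(\int_0^1 (f(s))^{m-1}s^k\,ds\right)z^k,$$
whose coefficients are bounded in modulus by $\int_0^1|f(s)|^{m-1}\,ds<+\infty$, so the radius of convergence is at least $1$. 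This shows $\mathcal{G}(f)$ is a well-defined analytic function on $\mathbb{B}$.

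The main obstacle is genuinely minor and amounts to verifying that the critical exponent in each pointwise bound (arising from Lemmas 2.5 and 2.6) matches the stated range on $p$; both Hardy spaces contribute $1/p$ and Bergman spaces contribute $2/p$ in the growth estimate, and the factor $m-1$ from the power $(f(s))^{m-1}$ explains the thresholds $m-1$ and $2(m-1)$ respectively. Apart from this bookkeeping, the argument is a routine application of the two lemmas together with the geometric lower bound on $|1-zs|$.
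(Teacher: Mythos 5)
Your proposal is correct and follows essentially the same route as the paper: bound $|1-zs|$ from below by $1-|z|$, apply the pointwise growth estimates of Lemmas \ref{le:25} and \ref{le:26} to $|f(s)|^{m-1}$, and observe that the resulting integral $\int_0^1(1-s)^{-r}\,ds$ converges because the hypotheses on $p$ force $r=(m-1)/p<1$ (Hardy case) or $r=2(m-1)/p<1$ (Bergman case). The extra paragraph on analyticity via the expansion of $1/(1-zs)$ is not needed for this lemma (the paper defers that to Lemma \ref{P33}), but it is harmless and correct.
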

\begin{proof}
(1) For $f\in H^p$, from Lemma \ref{le:25} and the fact that
$$\frac1{|1-zs|}\leq\frac1{1-|z||s|}\leq\frac1{1-|z|},$$
it follows that $$\begin{aligned}|\mathcal{G}(f)(z)|\leq&\int^1_0\frac{\left(|f(s)|\right)^{m-1}}{|1-zs|}ds\\
\leq&\int^1_0\frac{\left(\left(\frac2{1-s}\right)^\frac1p\|f\|_{H^p}\right)^{m-1}}{1-|z|} ds\\
=&\frac{2^{\frac{m-1}{p}}\|f\|_{H^p}^{m-1}}{1-|z|}\int^1_0\frac1{(1-s)^{\frac{m-1}{p}}}ds
<+\infty\ (\frac{m-1}{p}<1)\end{aligned}$$
since the integral $\int^1_0\frac1{(1-s)^r}ds$ converges for $r<1$.

(2) For $f\in A^p$,  it follows  from Lemma \ref{le:26} that
 $$\begin{aligned}|\mathcal{G}(f)(z)|
\leq&\int^1_0\frac{\left(\left(\frac1{1-s}\right)^\frac2p\|f\|_{A^p}\right)^{m-1}}{1-|z|} ds\\
=&\frac{\|f\|_{A^p}^{m-1}}{1-|z|}\int^1_0\frac1{(1-s)^{\frac{2(m-1)}{p}}}ds
<+\infty\ (\frac{2(m-1)}{p}<1).\end{aligned}$$

The desired conclusions follow.
\end{proof}
\begin{Lemma}\label{P33}\em Let $\mathcal{H}$ be an m-order infinite dimensional Hilbert tensor, and let $f\in H^p$ ($m-1< p<+\infty$) or  $f\in A^p$ ($2(m-1)<p<+\infty$).
	 Then  for each $z\in \mathbb{B}$,
	\begin{itemize}
		\item[(i)] $\mathcal{H}(f)(z)=\mathcal{G}(f)(z)=\int^1_0\frac{\left(f(s)\right)^{m-1}}{1-zs} ds$;
		\item[(ii)]$\mathcal{H}(f)(z)=\mathcal{G}(f)(z)=\int^1_0\left(f(\frac{s}{(s-1)z+1})\right)^{m-1}\frac1{(s-1)z+1}ds$.
	\end{itemize}
\end{Lemma}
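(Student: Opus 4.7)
\medskip

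\noindent\textbf{Proof proposal.} For part (i), the plan is to use the integral representation of the Hilbert tensor entries given in (1.4) to rewrite each coefficient, then swap sum and integral twice. Starting from the series in Lemma \ref{p31},
$$\mathcal{H}(f)(z)=\sum_{k=0}^{+\infty}\left(\sum_{i_2,\ldots,i_m=0}^{+\infty}\frac{a_{i_2}\cdots a_{i_m}}{k+i_2+\cdots+i_m+1}\right)z^k,$$
I replace $\tfrac{1}{k+i_2+\cdots+i_m+1}$ by $\int_0^1 t^{k+i_2+\cdots+i_m}dt$ and attempt to interchange the inner multiple sum with the integral to obtain $\int_0^1 t^k(f(t))^{m-1}dt$. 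The absolute convergence needed here is exactly what was verified inside the proof of Lemma \ref{p31}: bounding the partial sums by $\int_0^1|f_l(t)|^{m-1}dt$ and passing to the limit via the fact that $f\in L^{m-1}(\mathbb{B})$ (which is guaranteed by $f\in H^p$, $p>m-1$, or $f\in A^p$, $p>2(m-1)$). After this first swap, I then use $|z|<1$ together with the pointwise bounds from Lemmas \ref{le:25}--\ref{le:26} to recognise $\sum_{k=0}^\infty(zt)^k=\frac{1}{1-zt}$ and swap the outer sum with the integral, producing $\int_0^1\frac{(f(t))^{m-1}}{1-zt}dt=\mathcal{G}(f)(z)$.

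For part (ii), the plan is to perform the change of variable $t=\varphi(s):=\frac{s}{(s-1)z+1}$ in the integral from (i), with $z\in\mathbb{B}$ treated as a parameter. Direct differentiation gives
$$\varphi'(s)=\frac{1-z}{((s-1)z+1)^{2}},\qquad 1-z\varphi(s)=\frac{1-z}{(s-1)z+1},$$
so that $\frac{dt}{1-zt}=\frac{ds}{(s-1)z+1}$ and the factor $1-z$ cancels cleanly. The endpoints are preserved: $\varphi(0)=0$ and $\varphi(1)=1$. Since $|z|<1$ keeps $(s-1)z+1$ bounded away from zero on $[0,1]$, $\varphi$ is a smooth real bijection of $[0,1]$ onto the arc $\varphi([0,1])\subset\overline{\mathbb{B}}$, and $f\circ\varphi$ is well-defined on the open part by analyticity of $f$ on $\mathbb{B}$. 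Substituting then immediately produces the desired identity.

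The main obstacle is the first of the two sum-integral interchanges in (i): one has an $(m-1)$-fold sum with complex coefficients $a_{i_2}\cdots a_{i_m}$, so a naive Tonelli argument does not apply. The remedy is to dominate the partial sums as in Lemma \ref{p31}, replacing the coefficients by their moduli and recognising that $\sum_{i=0}^l|a_i|t^i$ is controlled uniformly in $l$ on $[0,1)$ via $f\in L^{m-1}(\mathbb{B})$, so that dominated convergence legitimises letting $l\to\infty$ inside the integral; an analogous (and simpler) argument handles the outer swap thanks to the factor $|z|<1$ and the integrability of $(1-t)^{-r}$ for $r=\tfrac{m-1}{p}$ or $r=\tfrac{2(m-1)}{p}$, which is less than $1$ by hypothesis. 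The substitution in (ii) is then essentially algebraic once the Jacobian cancellation above is observed.
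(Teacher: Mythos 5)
Your overall strategy coincides with the paper's: for (i), truncate $f$ to its Taylor partial sums $f_l$, verify the identity $\mathcal{H}(f_l)(z)=\int_0^1\frac{(f_l(s))^{m-1}}{1-zs}\,ds$ for the truncations (where every interchange is harmless), and pass to the limit $l\to\infty$; for (ii), reparametrize the path via $s(t)=\frac{t}{(t-1)z+1}$ with exactly the Jacobian cancellation you computed, so part (ii) of your proposal is correct as written and matches the paper.

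The one genuine problem is in your final paragraph, where you justify the inner interchange by ``replacing the coefficients by their moduli and recognising that $\sum_{i=0}^l|a_i|t^i$ is controlled uniformly in $l$ on $[0,1)$ via $f\in L^{m-1}(\mathbb{B})$.'' That claim is false in general: the modulus-coefficient majorant $\tilde f(t)=\sum_i|a_i|t^i$ of an $H^p$ or $A^p$ function can grow much faster than $|f(t)|$ as $t\to1^-$, and $f\in L^{m-1}$ gives no control on $\int_0^1\tilde f(t)^{m-1}dt$ --- this is precisely why the naive Tonelli argument fails, as you yourself observe one sentence earlier. The interchange must be run with the \emph{signed} partial sums, using $\sum_{i_2,\ldots,i_m=0}^{l}a_{i_2}\cdots a_{i_m}t^{i_2+\cdots+i_m}=(f_l(t))^{m-1}$ and the bound $|f_l(t)|^{m-1}$, as in Lemma \ref{p31}. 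The paper then controls the limit not by pointwise dominated convergence but by norm convergence: $f_l\to f$ in $H^p$ forces $f_l^{m-1}\to f^{m-1}$ in $H^{p/(m-1)}$ (resp.\ $A^{p/(m-1)}$), and Lemma \ref{le:25} (resp.\ Lemma \ref{le:26}) converts this into
\begin{equation*}
|\mathcal{H}(f_l)(z)-\mathcal{G}(f)(z)|\le C(z)\,\bigl\|f_l^{m-1}-f^{m-1}\bigr\|_{H^{p/(m-1)}}\longrightarrow 0,
\end{equation*}
where $C(z)$ is finite because $\frac{m-1}{p}<1$ (resp.\ $\frac{2(m-1)}{p}<1$). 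Your earlier sentence bounding things by $\int_0^1|f_l(t)|^{m-1}dt$ is the right instinct; delete the modulus-coefficient remedy and replace it with this norm-convergence step, and the argument closes.
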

	\begin{proof}
(i) Let $f_l(z)=\sum\limits_{k=0}^{l}a_kz^k.$ Obviously, $\lim\limits_{l\to\infty}f_l(z)=f(z),$ and hence, $\lim\limits_{l\to\infty}\left(f_l(z)\right)^{m-1}=(f(z))^{m-1}.$ Now we may define a polynomial
	 $$\mathcal{H}(f_l)(z)=\sum_{k=0}^{+\infty}\left(\sum_{i_2,i_3,\cdots,i_m=0}^{l}\frac{a_{i_2}a_{i_3}\cdots a_{i_m}}{k+i_2+i_3+\cdots+i_m+1}\right)z^k.
$$
Then we have
	$$\begin{aligned}
	\mathcal{H}(f_l)(z)=&\sum_{k=0}^{+\infty}z^k\sum_{i_2,i_3,\cdots,i_m=0}^la_{i_2}a_{i_3}\cdots a_{i_m}\int^1_0s^{k+i_2+\cdots+i_m}ds\\
=&\sum_{k=0}^{+\infty}z^k\int^1_0\left(\sum_{i_2,i_3,\cdots,i_m=0}^la_{i_2}a_{i_3}\cdots a_{i_m}s^{i_2+\cdots+i_m}\right)s^k ds\\
	=&\sum_{k=0}^{+\infty}z^k\int^1_0\left(\sum_{i=0}^{l}a_is^i\right)^{m-1}s^{k} ds\\
	=&\sum_{k=0}^{+\infty}\int^1_0\left(f_l(s)\right)^{m-1}(zs)^{k} ds\\
		=&\int^1_0\left(f_l(s)\right)^{m-1}\left(\sum_{k=0}^{+\infty}(zs)^{k}\right) ds\\
	=&\int^1_0\frac{\left(f_l(s)\right)^{m-1}}{1-zs} ds.
	\end{aligned}$$
	
For $z\in \mathbb{B}$ and $p>m-1$, it is obvious that the fact that $f(z)\in H^p$ implies that $(f(z))^{m-1}\in H^\frac{p}{m-1}$, and hence, $\left(f(z)\right)^{m-1}-\left(f_l(z)\right)^{m-1}\in H^\frac{p}{m-1}$. Furthermore, from Lemma \ref{le:25}, it follows that
$$\begin{aligned}
	|\mathcal{H}(f_l)(z)-\mathcal{G}(f)(z)|=&\left|\int^r_0\frac{\left(f_l(s)\right)^{m-1}-\left(f(s)\right)^{m-1}}{1-zs} ds\right|\\
	\leq&\int^1_0\frac{|\left(f_l(s)\right)^{m-1}-\left(f(s)\right)^{m-1}|}{|1-zs|}ds\\
	\leq&\int^1_0\frac{\left(\frac2{1-s}\right)^\frac{m-1}p\|f_l^{m-1}-f^{m-1}\|_{H^\frac{p}{m-1}}}{1-|z|} ds\\
=&\left(\frac{2^{\frac{m-1}{p}}}{1-|z|}\int^1_0\frac1{(1-s)^{\frac{m-1}{p}}}ds\right)\|f_l^{m-1}-f^{m-1}\|_{H^\frac{p}{m-1}}.
	\end{aligned}$$
Therefore, for each $z\in \mathbb{B}$, $$\lim_{l\to\infty}\mathcal{H}(f_l)(z)=\sum_{k=0}^{+\infty}\left(\sum_{i_2,i_3,\cdots,i_m=0}^{+\infty}\frac{a_{i_2}a_{i_3}\cdots a_{i_m}}{k+i_2+i_3+\cdots+i_m+1}\right)z^k=\mathcal{G}(f)(z).$$
 Then $\mathcal{G}(f)(z)$  defines an analytic function $\mathcal{H}(f)(z)=\lim\limits_{l\to\infty}\mathcal{H}(f_l)(z)$. That is,
$$\mathcal{H}(f)(z)=\mathcal{G}(f)(z)=\int^1_0\frac{\left(f(s)\right)^{m-1}}{1-zs} ds$$
for each $f\in H^p$ ($m-1<p<+\infty$).

Similarly, for $f\in A^p$, it follows from Lemma \ref{le:26} that $$\begin{aligned}
|\mathcal{H}(f_l)(z)-\mathcal{G}(f)(z)|
\leq&\int^1_0\frac{|\left(f_l(s)\right)^{m-1}-\left(f(s)\right)^{m-1}|}{|1-zs|}ds\\
\leq&\int^1_0\frac{\left(\frac1{1-s}\right)^\frac{2(m-1)}p\|f_l^{m-1}-f^{m-1}\|_{A^\frac{p}{m-1}}}{1-|z|} ds\\
=&\left(\frac{1}{1-|z|}\int^1_0\frac1{(1-s)^{\frac{2(m-1)}{p}}}ds\right)\|f_l^{m-1}-f^{m-1}\|_{A^\frac{p}{m-1}},
\end{aligned}$$
and so, $\mathcal{H}(f)(z)=\mathcal{G}(f)(z)$ for every $f\in A^p$ ($2(m-1)<p<+\infty$).


(ii) Given $f\in H^p$ ($m-1<p<+\infty$) or $f\in A^p$ ($2(m-1)<p<+\infty$), 
the integral $\mathcal{G}(f)(z)$ is independent of the path of integration. Then for $z\in \mathbb{B}$, we may choose the path of integration $$s(t)=\frac{t}{(t-1)z+1}, 0\leq t\leq1,$$ and hence
	$$s'(t)=\frac{ds(t)}{dt}=\frac{((t-1)z+1)-tz}{((t-1)z+1)^2}=\frac{1-z}{((t-1)z+1)^2}.$$
So we have
$$\begin{aligned}
\mathcal{G}(f)(z)=&\int^1_0\frac{\left(f(\frac{t}{(t-1)z+1})\right)^{m-1}}{1-z\frac{t}{(t-1)z+1}}s'(t) dt\\
=&\int^1_0\left(f(\frac{t}{(t-1)z+1})\right)^{m-1}\frac{(t-1)z+1}{(t-1)z+1-zt}\frac{1-z}{((t-1)z+1)^2} dt\\
=&\int^1_0\left(f(\frac{t}{(t-1)z+1})\right)^{m-1}\frac1{(t-1)z+1}dt.
\end{aligned}$$ The desired conclusion follows.
\end{proof}


\subsection{Boundedness of Hilbert tensor operator}
\begin{Theorem}\label{th:34}
	Let $\mathcal{H}$ be an m-order infinite dimensional Hilbert tensor, and let $\mathcal{H}(f)$ be as in Lemma \ref{p31}. Then $\mathcal{H}$ is bounded and $(m-1)$-homogeneous on Bergmans space $A^{p(m-1)}$ for $2<p<+\infty$, and satisfies the following:
	\begin{itemize}
		\item[(i)] If $4\leq p<+\infty$ and $f\in A^{p(m-1)}$, then \begin{equation}\label{eq:33}\|\mathcal{H}(f)\|_{A^p}\leq \frac{\pi}{\sin(\frac{2\pi}p)}\|f\|_{A^{p(m-1)}}^{m-1}.\end{equation}
			\item[(ii)] If $2<p\leq4$ and $f\in A^{p(m-1)}$, then \begin{equation}\label{eq:34}\|\mathcal{H}(f)\|_{A^p}\leq M\|f\|_{A^{p(m-1)}}^{m-1},\end{equation}
		where $M=4^{\frac4p-1}\sqrt{\pi}\frac{\Gamma(1-\frac2p)}{\Gamma(\frac32-\frac2p)}.$
	\end{itemize}
\end{Theorem}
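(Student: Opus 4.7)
The $(m-1)$-homogeneity is immediate from the series expression for $\mathcal{H}(f)$. For the boundedness, the plan is to adapt the Diamantopoulos technique \cite{D2004}, starting from the integral form in Lemma~\ref{P33}(ii),
$$\mathcal{H}(f)(z)=\int_0^1 \bigl(f(\phi_s(z))\bigr)^{m-1}\frac{ds}{(s-1)z+1},\qquad \phi_s(z):=\frac{s}{(s-1)z+1}.$$
Applying Minkowski's integral inequality in the $A^p$-norm reduces everything to estimating, uniformly in $s\in(0,1)$,
$$J(s):=\int_{\mathbb{B}}\frac{|f(\phi_s(z))|^{p(m-1)}}{|(s-1)z+1|^p}\,d\mu(z).$$

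The heart of the argument is the change of variables $u=\phi_s(z)$. The Möbius transformation $\phi_s$ fixes $1$ and sends $0\mapsto s$, $-1\mapsto s/(2-s)$, so it carries $\mathbb{B}$ biholomorphically onto the disk
$$D_s:=\left\{u\in\mathbb{C}:\left|u-\frac{1}{2-s}\right|<\frac{1-s}{2-s}\right\}\subset\mathbb{B},$$
internally tangent to $\partial\mathbb{B}$ at $1$. From the inverse $z=(u-s)/((1-s)u)$ one reads off $(s-1)z+1=s/u$ and the Jacobian $|dz/du|^2=s^2/((1-s)^2|u|^4)$, which together deliver the clean identity
$$J(s)=\frac{1}{s^{p-2}(1-s)^2}\int_{D_s}\frac{|f(u)|^{p(m-1)}}{|u|^{4-p}}\,d\mu(u).$$

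The two cases of the theorem now split according to the sign of $4-p$. When $p\ge 4$ the factor $|u|^{p-4}\le 1$ on $\mathbb{B}$, so the integrand is dominated by $|f(u)|^{p(m-1)}$; enlarging $D_s$ to $\mathbb{B}$ yields $J(s)^{1/p}\le \|f\|_{A^{p(m-1)}}^{m-1}s^{2/p-1}(1-s)^{-2/p}$, and integrating in $s$ produces the beta integral $\beta(2/p,1-2/p)=\pi/\sin(2\pi/p)$, which gives (i). When $2<p<4$ the weight $|u|^{p-4}$ is instead large near the origin, but $D_s$ stays away from $0$: every $u\in D_s$ satisfies $|u|\ge s/(2-s)$. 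Inserting this pointwise lower bound introduces an extra factor $((2-s)/s)^{(4-p)/p}$, and since $2-s\le 2$ on $[0,1]$ the remaining $s$-integral is dominated by $2^{(4-p)/p}\beta(1-2/p,1-2/p)$, which the duplication formula \eqref{eq:26} rewrites precisely as $M=4^{4/p-1}\sqrt{\pi}\,\Gamma(1-2/p)/\Gamma(3/2-2/p)$, yielding (ii).

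The step I expect to be most delicate is the geometric identification $\phi_s(\mathbb{B})=D_s$, together with the two quantitative consequences used above: that $D_s\subset\mathbb{B}$ (needed when $p\ge 4$ to kill the $|u|^{p-4}$ factor) and that $D_s$ stays at distance at least $s/(2-s)$ from $0$ (needed when $p<4$ to tame it). Once this picture is in place, the rest is a routine combination of Minkowski's inequality, the change of variables, and the gamma-function identities collected in Section~2.
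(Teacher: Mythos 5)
Your proposal is correct and follows essentially the same route as the paper: the integral form of $\mathcal{H}(f)$, Minkowski's integral inequality, the substitution $u=\varphi(s,z)$, and the beta-function and duplication identities. The only cosmetic difference is that you change variables first and read the two key weight bounds ($|u|\le 1$ for $p\ge 4$, $|u|\ge s/(2-s)$ for $2<p<4$) off the explicit image disk $D_s$, whereas the paper bounds $|\psi(t,z)|^{p-4}$ before substituting via $|\varphi(t,z)|\le 1$ and $|(t-1)z+1|\le 2$; these give identical estimates and the same constants.
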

\begin{proof} Let $$\varphi(t,z)=\frac{t}{(t-1)z+1}\mbox{ and }\psi(t,z)=\frac{1}{(t-1)z+1}$$ for all $z\in \mathbb{B}$ and all real number $t$ with $0<t<1.$ Then
$$\frac{\partial\varphi(t,z)}{\partial z}= \frac{-t(t-1)}{((t-1)z+1)^2}
=t(1-t)(\psi(t,z))^2.$$

Let $\mathcal{T}_t(f)(z)=\psi(t,z)\left(f(\varphi(t,z))\right)^{m-1}.$ Then for each $t\in(0,1)$, we have
$$\begin{aligned}\|\mathcal{T}_t(f)\|_{A^p}^p=&\int_{\mathbb{B}}|\psi(t,z)|^p\left|\left(f(\varphi(t,z))\right)^{m-1}\right|^pd\mu(z)\\
=&\int_{\mathbb{B}}|\psi(t,z)|^{p-4}\left|f(\varphi(t,z))\right|^{p(m-1)}|\psi(t,z)|^4d\mu(z)\\
=&\int_{\mathbb{B}}|\psi(t,z)|^{p-4}\left|f(\varphi(t,z))\right|^{p(m-1)}\frac1{t^2(1-t)^2}\left|\frac{\partial\varphi(t,z)}{\partial z}\right|^2d\mu(z)\\
=&\frac1{t^2(1-t)^2}\int_{\mathbb{B}}|\psi(t,z)|^{p-4}\left|f(\varphi(t,z))\right|^{p(m-1)}\left|\frac{\partial\varphi(t,z)}{\partial z}\right|^2d\mu(z).
\end{aligned}$$

(i) For $+\infty> p\geq4$ and each $t\in(0,1)$, we have $$\psi(t,z)=\frac{\varphi(t,z)}{t},$$
$$|\varphi(t,z)|=\frac{t}{|(t-1)z+1|}\leq \frac{t}{1-|t-1||z|}\leq\frac{t}{1-(1-t)}=1$$
and furthermore,
$$\begin{aligned} \|\mathcal{T}_t(f)\|_{A^p}^p=&\frac1{t^2(1-t)^2}\int_{\mathbb{B}}\left|\frac{\varphi(t,z)}{t}\right|^{p-4}\left|f(\varphi(t,z))\right|^{p(m-1)}\left|\frac{\partial\varphi(t,z)}{\partial z}\right|^2d\mu(z)\\
=&\frac1{t^{p-2}(1-t)^2}\int_{\mathbb{B}}|\varphi(t,z)|^{p-4}\left|f(\varphi(t,z))\right|^{p(m-1)}\left|\frac{\partial\varphi(t,z)}{\partial z}\right|^2d\mu(z)\\
\leq&\frac1{t^{p-2}(1-t)^2}\int_{\mathbb{B}}\left|f(\varphi(t,z))\right|^{p(m-1)}\left|\frac{\partial\varphi(t,z)}{\partial z}\right|^2d\mu(z)\\
=&\frac1{t^{p-2}(1-t)^2}\int_{\mathbb{D}}\left|f(y)\right|^{p(m-1)}d\mu(y),
\end{aligned}$$
where $y=\varphi(t,z)$, $\mathbb{D}=\{y=\varphi(t,z); z\in \mathbb{B}\}$ and $d\mu(y)=\left|\frac{\partial\varphi(t,z)}{\partial z}\right|^2d\mu(z).$ Therefore, we have
\begin{align}\|\mathcal{T}_t(f)\|_{A^p}\leq&\frac1{t^{1-\frac2p}(1-t)^{\frac2p}}\left(\int_{\mathbb{D}}\left|f(y)\right|^{p(m-1)}d\mu(y)\right)^\frac1p\nonumber\\
=&\frac1{t^{1-\frac2p}(1-t)^{\frac2p}}\left(\left(\int_{\mathbb{D}}\left|f(y)\right|^{p(m-1)}d\mu(y)\right)^\frac1{p(m-1)}\right)^{m-1}\nonumber\\
=&t^{\frac2p-1}(1-t)^{-\frac2p}\|f\|_{A^{p(m-1)}}^{m-1}.\label{eq:35}
\end{align}

From the equality  $$\mathcal{H}(f)(z)=\int^1_0\psi(t,z)\left(f(\varphi(t,z))\right)^{m-1}dt=\int^1_0\mathcal{T}_t(f)(z)dt$$
and the Minkowski's integral inequality, it follows that
\begin{align}\|\mathcal{H}(f)\|_{A^p}=&\left(\int_{\mathbb{B}}\left|\mathcal{H}(f)(z)\right|^pd\mu(z)\right)^\frac1p\nonumber\\
=&\left(\int_{\mathbb{B}}\left|\int^1_0\mathcal{T}_t(f)(z)dt\right|^pd\mu(z)\right)^\frac1p\nonumber\\
\leq&\int^1_0\left(\int_{\mathbb{B}}\left|\mathcal{T}_t(f)(z)\right|^pd\mu(z)\right)^\frac1pdt\nonumber\\
=&\int^1_0\|\mathcal{T}_t(f)\|_{A^p}dt,\label{eq:36}
\end{align}
and hence, using \eqref{eq:35}, we have
$$\begin{aligned}
\|\mathcal{H}(f)\|_{A^p}\leq&\left(\int^1_0t^{\frac2p-1}(1-t)^{(1-\frac2p)-1}dt\right)\|f\|_{A^{p(m-1)}}^{m-1}\\
=&\beta(\frac2p,1-\frac2p)\|f\|_{A^{p(m-1)}}^{m-1}\\
=&\frac{\Gamma(\frac2p)\Gamma(1-\frac2p)}{\Gamma(\frac2p+1-\frac2p)}\|f\|_{A^{p(m-1)}}^{m-1}\\
=&\frac\pi{\sin(\frac{2\pi}{p})}\|f\|_{A^{p(m-1)}}^{m-1}.
\end{aligned}$$

(ii) For $2<p\leq4$ and each $t\in(0,1)$, we also have $$|(\psi(t,z))^{-1}|=|(t-1)z+1|\leq2,$$ and so,
$$\begin{aligned}\|\mathcal{T}_t(f)\|_{A^p}^p
=&\frac1{t^2(1-t)^2}\int_{\mathbb{B}}\left|\psi(t,z)\right|^{p-4}\left|f(\varphi(t,z))\right|^{p(m-1)}\left|\frac{\partial\varphi(t,z)}{\partial z}\right|^2d\mu(z)\\
=&\frac1{t^2(1-t)^2}\int_{\mathbb{B}}\left|(\psi(t,z))^{-1}\right|^{4-p}\left|f(\varphi(t,z))\right|^{p(m-1)}\left|\frac{\partial\varphi(t,z)}{\partial z}\right|^2d\mu(z)\\
\leq&\frac{2^{4-p}}{t^2(1-t)^2}\int_{\mathbb{B}} \left|f(\varphi(t,z))\right|^{p(m-1)}\left|\frac{\partial\varphi(t,z)}{\partial z}\right|^2d\mu(z)\\
=&\frac{2^{4-p}}{t^2(1-t)^2}\int_{\mathbb{D}}\left|f(y)\right|^{p(m-1)}d\mu(y),
\end{aligned}$$
where $y=\varphi(t,z)$, $\mathbb{D}=\{y=\varphi(t,z); z\in \mathbb{B}\}$ and $d\mu(y)=\left|\frac{\partial\varphi(t,z)}{\partial z}\right|^2d\mu(z).$ Therefore, we have
\begin{align}\|\mathcal{T}_t(f)\|_{A^p}\leq&\frac{2^{\frac4p-1}}{t^{\frac2p}(1-t)^{\frac2p}}\left(\left(\int_{\mathbb{D}}\left|f(y)\right|^{p(m-1)}d\mu(y)\right)^\frac1{p(m-1)}\right)^{m-1}\nonumber\\
=&2^{\frac4p-1}t^{-\frac2p}(1-t)^{-\frac2p}\|f\|_{A^{p(m-1)}}^{m-1}.\label{eq:37}
\end{align}
From \eqref{eq:36}, \eqref{eq:37} and the duplication formula \eqref{eq:26} of the gamma function $\Gamma(\cdot)$, it follows that
$$\begin{aligned}
\|\mathcal{H}(f)\|_{A^p}\leq&\int^1_0\|\mathcal{T}_t(z)\|_{A^p}dt\\
\leq&2^{\frac4p-1}\left(\int^1_0t^{(1-\frac2p)-1}(1-t)^{(1-\frac2p)-1}dt\right)\|f\|_{A^{p(m-1)}}^{m-1}\\
=&2^{\frac4p-1}\beta(1-\frac2p,1-\frac2p)\|f\|_{A^{p(m-1)}}^{m-1}\\
=&2^{\frac4p-1}\frac{\Gamma(1-\frac2p)\Gamma(1-\frac2p)}{\Gamma(2-\frac4p)}\|f\|_{A^{p(m-1)}}^{m-1}\\
=&2^{\frac4p-1}\left(2^{1-2(1-\frac2p)}\sqrt{\pi}\frac{\Gamma(1-\frac2p)}{\Gamma(1-\frac2p+\frac12)}\right)\|f\|_{A^{p(m-1)}}^{m-1}\\
=&4^{\frac4p-1}\sqrt{\pi}\frac{\Gamma(1-\frac2p)}{\Gamma(\frac32-\frac2p)}\|f\|_{A^{p(m-1)}}^{m-1}.
\end{aligned}$$
 The desired conclusions follow.
\end{proof}

 Define an operator $T_\mathcal{H} : A^{p(m-1)} \to A^p$
by the formula
\begin{equation} \label{TH}
T_\mathcal{H}(f) (z) := \begin{cases}\|f \|_{A^{p(m-1)}}^{2-m}\mathcal{H}(f)(z),\ f \neq0\\
0,\ \ \ \ \ \ \ \ \ \ \  \ \ \ \ \ \ \ \ \ \ \ \
f =0.\end{cases}
\end{equation}
When $m$ is even, define another operator $F_\mathcal{H} :  A^{p} \to  A^p$ by the formula
\begin{equation} \label{FH}
F_\mathcal{H}(f) (z) :=\left(\mathcal{H}(f)(z)\right)^{\frac1{m-1}}.
\end{equation}
Clearly, both
$F_\mathcal{H}$ and $T_\mathcal{H}$ are bounded and positively homogeneous by Theorem \ref{th:34}. So we may define the following the operator norms (\cite{SQ2013}):
\begin{equation} \label{eq:310}
\|T_\mathcal{H}\|=\sup_{\|f\|_{A^{p(m-1)}}=1}\|T_\mathcal{H}(f)\|_{A^p}\mbox{ and } \|F_\mathcal{H}\|=\sup_{\|f\|_{A^p}=1}\|F_\mathcal{H}(f)\|_{A^p}.
\end{equation}
 The
following upper bounds and properities of the operator norm may be established.
\begin{Theorem}\label{th:35}
	Let $\mathcal{H}$ be an m-order infinite dimensional Hilbert tensor, and let $\mathcal{H}(f)$ be as in Lemma \ref{p31}. Then $T_\mathcal{H}$ is a bounded and positively homogeneous operator from Bergmans space $A^{p(m-1)}$ to $A^p$ for $2<p<+\infty$, and its norm satisfies the following:
	\begin{itemize}
		\item[(i)] If $4\leq p<+\infty$, then \begin{equation}\label{eq:311}\|T_\mathcal{H}\|\leq \frac{\pi}{\sin(\frac{2\pi}p)}.\end{equation}
		\item[(ii)] If $2<p\leq4$, then \begin{equation}\label{eq:312}\|T_\mathcal{H}\|\leq 4^{\frac4p-1}\sqrt{\pi}\frac{\Gamma(1-\frac2p)}{\Gamma(\frac32-\frac2p)}.\end{equation}
	\end{itemize}
\end{Theorem}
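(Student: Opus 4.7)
The plan is to reduce Theorem \ref{th:35} directly to Theorem \ref{th:34}, since the operator $T_\mathcal{H}$ was designed precisely to absorb the $(m-1)$-homogeneity of $\mathcal{H}$ by dividing out a factor of $\|f\|_{A^{p(m-1)}}^{m-2}$. First I would check positive homogeneity. From the series representation \eqref{eq:15}, $\mathcal{H}$ is plainly $(m-1)$-homogeneous (the product $a_{i_2}\cdots a_{i_m}$ scales by $t^{m-1}$), and the Bergman norm is $1$-homogeneous. Hence for any $t>0$ and $f\neq 0$,
\begin{equation*}
T_\mathcal{H}(tf)(z)=\|tf\|_{A^{p(m-1)}}^{2-m}\mathcal{H}(tf)(z)=t^{2-m}\|f\|_{A^{p(m-1)}}^{2-m}\cdot t^{m-1}\mathcal{H}(f)(z)=t\,T_\mathcal{H}(f)(z),
\end{equation*}
and the identity extends trivially to $f=0$.

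Next I would establish boundedness by applying Theorem \ref{th:34} directly. For any nonzero $f\in A^{p(m-1)}$, the definition of $T_\mathcal{H}$ gives
\begin{equation*}
\|T_\mathcal{H}(f)\|_{A^p}=\|f\|_{A^{p(m-1)}}^{2-m}\,\|\mathcal{H}(f)\|_{A^p}\leq C_p\,\|f\|_{A^{p(m-1)}}^{(2-m)+(m-1)}=C_p\,\|f\|_{A^{p(m-1)}},
\end{equation*}
where $C_p=\pi/\sin(2\pi/p)$ when $4\leq p<+\infty$ (by part (i) of Theorem \ref{th:34}) and $C_p=4^{4/p-1}\sqrt{\pi}\,\Gamma(1-2/p)/\Gamma(3/2-2/p)$ when $2<p\leq 4$ (by part (ii)). The exponent arithmetic $(2-m)+(m-1)=1$ is the whole point of the normalization.

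Finally, from the operator norm definition \eqref{eq:310}, restricting the previous estimate to the unit sphere $\|f\|_{A^{p(m-1)}}=1$ yields $\|T_\mathcal{H}(f)\|_{A^p}\leq C_p$ on this sphere, and taking the supremum produces $\|T_\mathcal{H}\|\leq C_p$, which is exactly \eqref{eq:311} in case (i) and \eqref{eq:312} in case (ii).

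I do not anticipate any genuine obstacle: the substantive analytic work, namely the integral representation in Lemma \ref{P33}, the change of variables through the path $s(t)=t/((t-1)z+1)$, and the Minkowski integral inequality combined with the beta/duplication identities, has already been carried out in Theorem \ref{th:34}. Theorem \ref{th:35} is essentially a dimensional/homogeneity bookkeeping statement, recording that the scaled map $T_\mathcal{H}$ is positively homogeneous of degree one and therefore admits a standard operator norm bounded by the same constants that govern $\mathcal{H}$ on the unit sphere.
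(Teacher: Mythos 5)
Your proposal is correct and follows essentially the same route as the paper: factor out $\|f\|_{A^{p(m-1)}}^{2-m}$ from the definition of $T_\mathcal{H}$, apply the two bounds of Theorem \ref{th:34}, note that the exponents cancel to give degree one, and pass to the supremum over the unit sphere. Your explicit verification of positive homogeneity is a small addition the paper leaves implicit, but otherwise the arguments coincide.
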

\begin{proof}
	It follows the definition \eqref{TH} of the operator $T_\mathcal{H}$  that
	$$\|T_\mathcal{H}(f)\|_{A^p}=\|\|f \|_{A^{p(m-1)}}^{2-m}\mathcal{H}(f)\|_{A^p}= \|f  \|_{A^{p(m-1)}}^{2-m}\|\mathcal{H}(f)\|_{A^p}.$$
	Then an application of Theorem \ref{th:34} yields the following:
		\begin{itemize}
		\item[(i)] For $4\leq p<+\infty$,
	$$\|T_\mathcal{H}(f)\|_{A^p}\leq \|f\|_{A^{p(m-1)}}^{2-m}\left( \frac{\pi}{\sin(\frac{2\pi}p)}\|f\|_{A^{p(m-1)}}^{m-1}\right)=\frac{\pi}{\sin(\frac{2\pi}p)}\|f\|_{A^{p(m-1)}}. $$
		\item[(ii)] For $2<p\leq4$,
	$$\begin{aligned}\|T_\mathcal{H}(f)\|_{A^p}\leq& \|f\|_{A^{p(m-1)}}^{2-m}\left( 4^{\frac4p-1}\sqrt{\pi}\frac{\Gamma(1-\frac2p)}{\Gamma(\frac32-\frac2p)}\|f\|_{A^{p(m-1)}}^{m-1}\right)\\=&4^{\frac4p-1}\sqrt{\pi}\frac{\Gamma(1-\frac2p)}{\Gamma(\frac32-\frac2p)}\|f\|_{A^{p(m-1)}}. \end{aligned}$$
\end{itemize}
So the desired conclusions directly follow from the definition of operator norm \eqref{eq:310}.	
\end{proof}
\begin{Theorem}\label{th:36}
	Let $\mathcal{H}$ be an m-order infinite dimensional Hilbert tensor, and let  $\mathcal{H}(f)$ be as in Lemma \ref{p31}. Then $F_\mathcal{H}$ is a bounded and positively homogeneous operator from Bergmans space $A^{p}$ to $A^p$ for $2(m-1)<p<+\infty$, and its norm satisfies the following:
	\begin{itemize}
		\item[(i)] If $4(m-1)\leq p<+\infty$, then \begin{equation}\label{eq:311}\|F_\mathcal{H}\|\leq \left(\frac{\pi}{\sin(\frac{2(m-1)\pi}p)}\right)^\frac1{m-1}.\end{equation}
		\item[(ii)] If $2(m-1)<p\leq4(m-1)$, then \begin{equation}\label{eq:312}\|F_\mathcal{H}\|\leq4^{\frac4p}\left(\frac{\sqrt{\pi}\Gamma(1-\frac{2(m-1)}p)}{4\Gamma(\frac32-\frac{2(m-1)}p)}\right)^\frac1{m-1}.\end{equation}
	\end{itemize}
\end{Theorem}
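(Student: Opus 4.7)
The plan is to reduce Theorem \ref{th:36} to Theorem \ref{th:34} by the exponent change $q=p/(m-1)$. Because $m$ is even, $m-1$ is odd, and for any branch of the $(m-1)$-th root the pointwise identity $|F_\mathcal{H}(f)(z)|^p=|\mathcal{H}(f)(z)|^{p/(m-1)}$ holds. Integrating this identity against $d\mu$ over $\mathbb{B}$ immediately yields the workhorse identity
\begin{equation*}
\|F_\mathcal{H}(f)\|_{A^p}=\|\mathcal{H}(f)\|_{A^{p/(m-1)}}^{1/(m-1)}.
\end{equation*}
This is the only analytic input; everything else is substitution into Theorem \ref{th:34} and bookkeeping of constants.

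Second, I would verify that the hypotheses transfer correctly. Setting $q=p/(m-1)$, the constraint $q>2$ in Theorem \ref{th:34} is exactly $p>2(m-1)$, and the subcases $q\ge 4$ and $2<q\le 4$ translate to $p\ge 4(m-1)$ and $2(m-1)<p\le 4(m-1)$, matching parts (i) and (ii). Crucially, $q(m-1)=p$, so the norm on the right in Theorem \ref{th:34} becomes $\|f\|_{A^p}^{m-1}$, not a larger Bergman norm. Combined with the workhorse identity, this gives $\|F_\mathcal{H}(f)\|_{A^p}\le C_{m,p}^{1/(m-1)}\|f\|_{A^p}$ for an explicit constant $C_{m,p}$, proving boundedness and hence the two stated inequalities. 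Positive homogeneity is immediate from the $(m-1)$-homogeneity of $\mathcal{H}$: for $t>0$,
\begin{equation*}
F_\mathcal{H}(tf)=\bigl(t^{m-1}\mathcal{H}(f)\bigr)^{1/(m-1)}=t\,F_\mathcal{H}(f).
\end{equation*}

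What remains is pure algebra. In case (i), $C_{m,p}=\pi/\sin(2\pi/q)=\pi/\sin(2(m-1)\pi/p)$, and extracting the $(m-1)$-th root reproduces the stated bound verbatim. In case (ii), $C_{m,p}=4^{4/q-1}\sqrt{\pi}\,\Gamma(1-2/q)/\Gamma(3/2-2/q)$; substituting $q=p/(m-1)$ turns the exponent $4/q-1$ into $4(m-1)/p-1$, and dividing by $m-1$ rewrites the power of $4$ as $4^{4/p}\cdot 4^{-1/(m-1)}$. Pulling the factor $4^{-1/(m-1)}$ back inside the $(m-1)$-th root as a $1/4$ in the numerator produces exactly the claimed expression $4^{4/p}\bigl(\sqrt{\pi}\,\Gamma(1-2(m-1)/p)/(4\Gamma(3/2-2(m-1)/p))\bigr)^{1/(m-1)}$. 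The only step demanding care is this exponent bookkeeping; there is no new analytic obstacle beyond Theorem \ref{th:34}.
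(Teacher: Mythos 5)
Your proof is correct and takes essentially the same route as the paper: both rest on the identity $\|F_\mathcal{H}(f)\|_{A^p}=\|\mathcal{H}(f)\|_{A^{p/(m-1)}}^{1/(m-1)}$ together with the substitution $q=p/(m-1)$ (so that $q(m-1)=p$ and the case splits $q\ge 4$, $2<q\le 4$ become $p\ge 4(m-1)$, $2(m-1)<p\le 4(m-1)$), and the same extraction of the $(m-1)$-th root of the constants. The only cosmetic difference is that you invoke the conclusion of Theorem \ref{th:34} directly at the exponent $p/(m-1)$, whereas the paper re-runs the Minkowski-inequality argument on $\int_0^1\mathcal{T}_t(f)\,dt$ for that exponent; the resulting bounds are identical.
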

\begin{proof}
	It follows the definition \eqref{FH} of the operator $F_\mathcal{H}$
	and the Minkowski's integral inequality  that
	 \begin{align}\|F_\mathcal{H}(f)\|_{A^p}=&\left(\int_{\mathbb{B}}\left|\left(\mathcal{H}(f)(z)\right)^{\frac1{m-1}}\right|^pd\mu(z)\right)^\frac1p\nonumber\\
	=&\left(\left(\int_{\mathbb{B}}\left|\mathcal{H}(f)(z)\right|^{\frac{p}{m-1}}d\mu(z)\right)^\frac{m-1}p\right)^{\frac1{m-1}}\nonumber\\
	=&\left(\left(\int_{\mathbb{B}}\left|\int^1_0\mathcal{T}_t(f)(z)dt\right|^{\frac{p}{m-1}}d\mu(z)\right)^\frac{m-1}p\right)^\frac1{m-1}\nonumber\\
	 \leq&\left(\int^1_0\left(\int_{\mathbb{B}}\left|\mathcal{T}_t(f)(z)\right|^\frac{p}{m-1}pd\mu(z)\right)^\frac{m-1}pdt\right)^\frac1{m-1}\nonumber\\
	=&\left(\int^1_0\|\mathcal{T}_t(f)\|_{A^\frac{p}{m-1}}dt\right)^\frac1{m-1},\label{eq:315}
	\end{align}
Using the proof technique of Theorem \ref{th:34} ($p$ is replaced by $\frac{p}{m-1}$),  the followings may be proved easily:
\begin{itemize}
	\item[(i)] For $4\leq \frac{p}{m-1}<+\infty$,
	$$\|\mathcal{T}_t(f)\|_{A^\frac{p}{m-1}}\leq t^{\frac{2(m-1)}p-1}(1-t)^{-\frac{2(m-1)}p}\|f\|_{A^{p}}^{m-1},$$
and hence, 	$$\|F_\mathcal{H}(f)\|_{A^p}\leq \left( \frac{\pi}{\sin(\frac{2(m-1)\pi}p)}\|f\|_{A^{p}}^{m-1}\right)^\frac1{m-1}=\left(\frac{\pi}{\sin(\frac{2(m-1)\pi}p)}\right)^\frac1{m-1}\|f\|_{A^{p}}. $$
	\item[(ii)] For $2<\frac{p}{m-1}\leq4$, $$\|\mathcal{T}_t(f)\|_{A^\frac{p}{m-1}}\leq 2^{\frac{4(m-1)}p-1}t^{-\frac{2(m-1)}p}(1-t)^{-\frac{2(m-1)}p}\|f\|_{A^{p}}^{m-1},$$
and hence,
	$$\begin{aligned}\|F_\mathcal{H}(f)\|_{A^p}\leq& \left( 4^{\frac{4(m-1)}p-1}\sqrt{\pi}\frac{\Gamma(1-\frac{2(m-1)}p)}{\Gamma(\frac32-\frac{2(m-1)}p)}\|f\|_{A^{p}}^{m-1}\right)^\frac1{m-1}\\=&4^{\frac4p}\left(\frac{\sqrt{\pi}\Gamma(1-\frac{2(m-1)}p)}{4\Gamma(\frac32-\frac{2(m-1)}p)}\right)^\frac1{m-1}\|f\|_{A^{p}}. \end{aligned}$$
\end{itemize}
So the desired conclusions directly follow from the definition of operator norm \eqref{eq:310}.	
\end{proof}
Let $p=4$ in Theorem \ref{th:35} ((i) or (ii)) and $p=4(m-1)$ in Theorem \ref{th:36}((i) or (ii)), respectively. Then the following conclusions are easily obtained.
\begin{Corollary}\label{co:36}
Let	$\mathcal{H}$ be an $m-$order infinite dimensional Hilbert tensor, and let $\mathcal{H}(f)$ be as in Lemma \ref{p31}. Then
\begin{itemize} \item[(i)] $T_\mathcal{H}:A^{4(m-1)}\to A^4$ is a bounded and positively homogeneous operator  and $$\|T_\mathcal{H}\|\leq \pi;$$
	\item[(ii)]$F_\mathcal{H}:A^{4(m-1)}\to A^{4(m-1)}$ is a bounded and positively homogeneous operator and $$\|F_\mathcal{H}\|\leq \pi^\frac1{m-1}.$$
\end{itemize}
\end{Corollary}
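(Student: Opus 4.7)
The plan is to observe that Corollary \ref{co:36} is a direct specialization of Theorems \ref{th:35} and \ref{th:36}: one simply substitutes the distinguished value of $p$ into the upper bounds already established, and verifies that the gamma/sine expressions collapse to clean constants. There is no new analytical work; the only task is a careful evaluation at the boundary point $p=4$ (respectively $p=4(m-1)$), where the ranges $[4,\infty)$ and $(2,4]$ (respectively $[4(m-1),\infty)$ and $(2(m-1),4(m-1)]$) meet.

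For part (i), I would apply Theorem \ref{th:35}(i) with $p=4$, which is admissible since $4\leq p<\infty$. The bound becomes
\[
\|T_\mathcal{H}\| \;\leq\; \frac{\pi}{\sin(2\pi/4)} \;=\; \frac{\pi}{\sin(\pi/2)} \;=\; \pi.
\]
As a consistency check, one could also apply Theorem \ref{th:35}(ii) at $p=4$: the factor $4^{4/p-1}$ reduces to $4^0=1$, and $\sqrt{\pi}\,\Gamma(1-2/p)/\Gamma(3/2-2/p) = \sqrt{\pi}\,\Gamma(1/2)/\Gamma(1) = \sqrt{\pi}\cdot\sqrt{\pi} = \pi$, giving the same estimate; this confirms the two branches agree at the boundary. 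Positive homogeneity and boundedness are inherited directly from Theorem \ref{th:35}.

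For part (ii), I would apply Theorem \ref{th:36}(i) with $p=4(m-1)$, which is admissible since $4(m-1)\leq p<\infty$. Here the sine argument becomes $2(m-1)\pi/(4(m-1)) = \pi/2$, so
\[
\|F_\mathcal{H}\| \;\leq\; \left(\frac{\pi}{\sin(\pi/2)}\right)^{\!1/(m-1)} \;=\; \pi^{1/(m-1)}.
\]
Again Theorem \ref{th:36}(ii) at $p=4(m-1)$ gives the identical value after the same $\Gamma(1/2)\Gamma(1/2)=\pi$ simplification, confirming continuity across the two cases. Boundedness, the mapping property $F_\mathcal{H}:A^{4(m-1)}\to A^{4(m-1)}$, and positive homogeneity are inherited from Theorem \ref{th:36}.

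The only potential pitfall is the boundary overlap: I should state explicitly which case of the parent theorem I am invoking, to avoid the appearance of circularity or ambiguity. There is no substantive obstacle beyond this bookkeeping, since the corollary is genuinely a numerical evaluation of the already-proved estimates.
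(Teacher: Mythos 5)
Your proposal is correct and is exactly the paper's own argument: the authors obtain the corollary by setting $p=4$ in Theorem \ref{th:35} and $p=4(m-1)$ in Theorem \ref{th:36}, noting (as you do) that either branch of each theorem applies at the boundary and yields the same constant after the simplifications $\sin(\pi/2)=1$ and $\Gamma(1/2)^2=\pi$. Your additional cross-check that cases (i) and (ii) agree at the overlap point is a nice touch but not needed beyond what the paper states.
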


\begin{remark}
	\begin{itemize}
		\item[(i)]  In this paper, the boundedness of Hilbert tensor operator is obtained on $A^p$ for $p>2(m-1)$. For $0<p\leq2(m-1)$,  whether or not  Hilbert tensor operator is  bounded on Bergman space $A^p$ or Hardy space $H^p$ or other space of analytic functions.
		\item[(ii)] Are the upper bounds of norm of Hilbert tensor operator the best in this paper?
		\item[(iii)]  May the operator norms of $T_\mathcal{H}$ and $F_\mathcal{H}$ be given the exact value?
	\end{itemize}
\end{remark}



\end{document}